\theoremstyle{plain}
\newtheorem{corollary}{Corollary}
\newtheorem{definition}{Definition}
\newtheorem{proposition}{Proposition}
\newtheorem{remark}{Remark}
\numberwithin{equation}{section}
\keywords{Generalized binomial distributions; hyperbolic Landau levels; L\'evy-Kintchine representation; quasi-infinitely-divisible distributions; quasi-L\'evy measures.} 
\begin{document}
\title{Analysis of generalized negative binomial distributions attached to hyperbolic Landau levels}
\author[H. Chhaiba]{Hassan Chhaiba}
\address{Department of Mathematics \\ Faculty of Sciences, Ibn Tofail University \\ P.O. Box 133, Kénitra \\ Morocco}
\email{chhaiba.hassan@gmail.com}

\author[N. Demni]{Nizar Demni}
\address{IRMAR, Universit\'e de Rennes 1\\ Campus de
Beaulieu\\ 35042 Rennes cedex\\ France}
\email{nizar.demni@univ-rennes1.fr}

\author[Z. Mouayn]{Zouhair Mouayn}
\address{Department of Mathematics\\ Faculty of Sciences and Technics (M'Ghila)\\ Sultan Moulay Slimane \\ PO. Box 523, B\'eni Mellal\\ Morocco}
\email{mouayn@fstbm.ac.ma}

\maketitle

\begin{abstract}
To each hyperbolic Landau level of the Poincar\'e disc is attached a generalized negative binomial distribution. In this paper, we compute the moment generating function of this distribution and supply its decomposition as a perturbation of the negative binomial distribution by a finitely-supported measure. Using the Mandel parameter, we also discuss the nonclassical nature of the associated coherent states. Next, we determine the L\'evy-Kintchine decomposition its characteristic function when the latter does not vanish and deduce that it is quasi-infinitely divisible except for the lowest hyperbolic Landau level corresponding to the negative binomial distribution. By considering the total variation of the obtained quasi-L\'evy measure, we introduce a new infinitely-divisible distribution for which we derive the characteristic function.    
 \end{abstract}

\section{Introduction}
The negative binomial coherent states (NBCS) were constructed in \cite{Fu-Sas} in a standard fashion from the negative binomial distribution (NBD). They interpolate between thermal and coherent states 
(\cite{GJT}) and reduce also to Susskind-Glogower phase states in a particular limiting regime (\cite{Fu-Sas}). As explained in \cite{Bar}, the NBD counts the total number of present photons given the number of detected ones in opposite to the binomial distribution. It also plays a key role in the description of random multiplicities in the process of multiple production of electrically-charged species (\cite{Gio}). 
From a geometrical point of view, NBCS exhibit a $SU(1,1)$-symmetry and their coefficients are eigenfunctions of a second-order differential operator in the Poincar\'e disc corresponding to the lowest eigenvalue (\cite{Mou}). This operator can be mapped using a gauge transformation to the Schr\"odinger operator in the disc with a constant magnetic field and the latter is the image by a Cayley transform of the Maass operator studied in \cite{Com} (see \cite{Mo} and references therein for further details). 
For a strong enough intensity of the magnetic field ($\nu > 1/2$), the spectrum admits a discrete (non degenerate) part named hyperbolic Landau levels given by 
the sequence $\epsilon_m^{(\nu)} = 4m\left( 2\nu -m-1\right), m=0,1,...,[\nu -(1/2)]$, where $[a]$ denotes the greater integer less than $a$. 

In \cite{Mou}, generalized negative binomial coherent states (GNBCS) were attached to eigenspaces corresponding to hyperbolic Landau levels $\epsilon_m^{(\nu)}, m \geq 0$ and give rise for any fixed $m$ to the following random variable $X_m$:
\begin{equation*}
\mathbb{P}(X_{m}=j) =\gamma _{j}^{(\nu, m, R)} \left(\tau R^{-2}\right)^{\left| m-j\right|}(1-\tau R^{-2})^{2\nu R^2-2m}
\left(P_{\frac{1}{2}\left( m+j-\left| m-j\right| \right)}^{\left( \left| m-j\right| ,2\nu R^2-2m-1\right) }\left( 1-2\tau R^{-2}\right) \right) ^{2},\text{ }j\geq 0.
\end{equation*}
In this equation, $P_{n}^{\left( \alpha ,\beta \right) }$ stands for the $n$-th Jacobi polynomial, $z$ belongs to the Poincar\'e disc $\mathbb{D}_R$ of radius $R > 0$, $\tau =z\overline{z}$ is the intensity of the light and $\gamma _{j}^{(\nu, m, R)} > 0$ is a normalizing constant. Since $X_0$ is a distributed as a NBD, we refer below to the distribution of $X_{m}$ as the generalized negative binomial distribution (GNBD).

In this paper, we derive a closed formula for the moment generating function of $X_{m}$ from which we deduce the mean and the variance. Doing so enables us to discuss the nonclassical nature of the corresponding statistics through the Mandel parameter. 
For $m=0,$ it is already known that the NBD exhibits super-Poissonian statistics describing a thermal light field with a bunched photon spacing (\cite{GJT}). However, the situation changes drastically for higher hyperbolic Landau levels $\epsilon_m^{(\nu)}, m \geq 1$ and the statistics can be sub-Poissonian (anti bunched), Poissonian (coherent) or super-Poissonian (bunched) depending on the light intensity $\tau$. Moreover, the Poissonian statistics occur when $\tau$ lies on a circle whose radius increases as $m$ does while keeping $R$ and $\nu$ fixed.  
 
We also study here some probabilistic aspects of the random variable $X_{m}$. For instance, we give for any $m \geq 1$ the decomposition of the GNBD as a perturbation of the NBD corresponding to $m=0$ by a finitely-supported measure. Besides, the mass function of the perturbation is expressed through a ${}_{3}F_{2}$ hypergeometric polynomial in the variable $\tau$. Next, we derive the L\'evy-Kintchine decomposition of the characteristic function of $X_m$ when the latter does not vanish and deduce that the GNBD is quasi-infinitely divisible unless $m=0$. It is remarkable that this quasi-infinite divisibility goes in parallel with the emergence of the zone where the statistics of $X_m$ are anti-bunching. 
Finally, by considering the total variation of the obtained quasi-L\'evy measure, we introduce a new infinitely-divisible distribution or equivalently a new L\'evy process.  

The paper is organized as follows. In section 2, we collect the definitions and some properties of various special functions which occur in the remainder of the paper. We also recall in the same section the 
coherent states formalism we will be using as well as basic facts on generalized Bergman spaces on the disk. Section 3 is devoted to the introduction of the GNBD together with the derivation of a closed expression for its moment generating function. There, we also write down the atomic decomposition of the GNBD and discuss the nonclassical nature of its statistics of $X_{m}$. In the last section, we derive the L\'{e}vy-Khintchine representation of the characteristic function of $X_m$ when the latter does not vanish and introduce the infinitely-divisible distribution whose L\'evy measure is the total variation of the quasi-L\'evy measure of the GNBD.

\section{Special functions and coherent states of generalized Bergman spaces in the unit disc} 
\subsection{Special functions}
We start with the Gamma function defined for $x > 0$ by (\cite{AAR}, p.6):
\begin{equation*}
\Gamma(x) = \int_0^{\infty} e^{-u}u^{x-1}du.
\end{equation*}
This function satisfies the Legendre duplication formula (\cite{AAR}, p.22):
\begin{equation}\label{Leg}
\sqrt{\pi}\Gamma(2x+1) = 2^{2x} \Gamma\left(x+\frac{1}{2}\right)\Gamma(x+1). 
\end{equation}
Next, let $k \geq 1$ be a positive integer and recall the Pochhammer symbol 
\begin{equation*}
(x)_k = (x+k-1)\dots (x+1)x
\end{equation*}
with the convention $(x)_0 := 1$. Note that 
\begin{equation*}
(x)_k = \frac{\Gamma(x+k)}{\Gamma(x)}, \,\, x > 0, 
\end{equation*}
while
\begin{eqnarray}
(-n)_k & = & \frac{(-1)^kn!}{(n-k)!}, \,\, k \leq n, \\ 
 &= & 0, \quad \quad \quad \,\, k > n \nonumber.
\end{eqnarray}
Now, let $a, b > -1$ be real numbers, then the Jacobi polynomial of parameters $a, b$ is defined by the expansion (which follows readily from Rodriguez formula, see \cite{AAR}, p.99): 
\begin{equation}\label{DefJac}
P_n^{(a,b)}(x) := \sum_{k=0}^n \binom{n+a}{k} \binom{n+b}{n-k} \left(\frac{x-1}{2}\right)^{n-k} \left(\frac{x+1}{2}\right)^{k}.
\end{equation}
When $a,b > -1$, the Jacobi polynomials $(P_n^{(a,b)})_n$ are orthogonal with respect to the Beta weight 
\begin{equation*}
(1-x)^a(1+x)^b{\bf 1}_{[-1,1]}(x)
\end{equation*}
and as such their zeros are simple and lie in $(-1,1)$ (\cite{AAR}, p.253). For sake of simplicity, denote
\begin{equation*}
-1 < x_1^{(n)} < \dots < x_n^{(n)} < 1 
\end{equation*} 
the increasingly-ordered zeros of $P_n^{(a,b)}$ without any reference to their dependence on the parameters $(a,b)$. Then
\begin{equation}\label{Fact}
P_n^{(a,b)}(x) = \frac{(n+a+b+1)_n}{2^nn!} \prod_{i=1}^n(x-x_i^{(n)}).
\end{equation}
Finally, we recall the definition of the hypergeometric series: for any positive integers $p, q$, 
\begin{equation*}
{}_pF_q(a_1, \dots, a_p; b_1, \dots; b_q; x) := \sum_{k \geq 0}\frac{(a_1)_k \dots (a_p)_k}{(b_1)_q\dots(b_q)_k} \frac{x^k}{k!}
\end{equation*}
whenever the series converges (\cite{AAR}, p.61-62). In this definition, $(a_i, 1 \leq i \leq p)$ are real numbers while $b_i \in \mathbb{R} \setminus \mathbb{N}$ for any $1 \leq i \leq q$. Note that if $a_i = -n$ for some $1 \leq i \leq p$ then the series terminates and we end up with a hypergeometric polynomial. For instance, the representation 
\begin{equation}\label{RepJac}
P_n^{(a,b)}(x) = \frac{(a+1)_n}{n!}{}_2F_1\left(-n, n+a+b+1, a+1; \frac{1-x}{2}\right)
\end{equation}
provides another definition of the Jacobi polynomial when $a,b > -1$. We shall also need the definition of Laguerre polynomials 
\begin{equation*}
L_n^{(a)}(x) = \frac{(a+1)_n}{n!}{}_1F_1(-n, a+1, x) = \frac{(a+1)_n}{n!} \sum_{k=0}^n \binom{n}{k}\frac{(-x)^k}{(a+1)_k k!}, \,\, a > -1.
\end{equation*}

\subsection{Coherent states}

\noindent In this paragraph, we recall from \cite{Gaz} (p.72-75) the formalism based on (generalized) coherent states leading to GNBD. 
 Let $(X, \mu)$ be a measurable space and denote $L^{2}(X,d\mu)$\ the space of $\mu$-square integrable functions on $X$. Let $\mathcal{A} \subset L^{2}(X,d\mu )$ be a closed subspace of infinite dimension with an orthonormal basis $\left\{ \Phi _{j}\right\} _{j=0}^{\infty }$ and let $(\mathcal{H}, \langle \,\mid \,\rangle)$ be a infinite-dimensional separable Hilbert space equipped with an orthonormal basis $\left\{ \phi _{j}\right\} _{j=0}^{\infty }$. Then the coherent states $\left\{ \mid x>\right\} _{x\in X}$ in $\mathcal{H}$ are defined by
\begin{equation}\label{CohSta}
\mid x>:=\left( \mathcal{N}\left( x\right) \right) ^{-\frac{1}{2}} \sum_{j=0}^{+\infty }\Phi _{j}\left( x\right) \mid \phi _{j}>\quad 
\end{equation}
where 
\begin{equation}
\mathcal{N}\left( x\right) =\sum_{j=0}^{+\infty }\Phi _{j}\left( x\right) 
\overline{\Phi _{j}\left( x\right) }.  \tag{2.1.2}
\end{equation}
They obey the normalization condition 
\begin{equation}
\left\langle x\mid x\right\rangle _{\mathcal{H}}=1  \tag{2.1.3}
\end{equation}
and provide the following resolution of the identity operator:
\begin{equation}\label{Res}
\mathbf{1}_{\mathcal{H}}=\int\limits_{X}\mid x><x\mid \mathcal{N}\left(
x\right) d\mu \left( x\right)  \tag{2.1.4}
\end{equation}
where $\mid x><x\mid$ is the Dirac's bra-ket notation for the rank-one operator $\varphi \mapsto \langle x \mid \varphi \rangle  x$. Note in passing that the choice of the Hilbert space $\mathcal{H}$ defines a quantization of the space $X$ by the coherent states defined by \eqref{CohSta} via the inclusion map $X\ni x\mapsto \mid x>\in \mathcal{H}$. In this respect, the property \eqref{Res} bridges between classical and quantum mechanics. 
\subsection{Generalized Bergman spaces}
Now, take $X = \mathbb{D}_1 = \left\{ z\in \mathbb{C},\left| z\right| <1\right\} = \mathbb{D}$ to be the unit disk endowed with the measure  
\begin{equation}
\mu_{\nu} \left(dz\right) := \left(1-z\overline{z}\right) ^{2\nu-2} \lambda(dz), \,\, \nu \geq 0,  
\end{equation}
$\lambda(dz)$ being the Lebesgue measure in $\mathbb{C}$. Note that $\mu_0$ is the volume element of $\mathbb{D}$ when the latter is equipped with its usual K\"{a}hler metric $ds^{2}=-\partial \overline{\partial }\ln\left( 1-z\overline{z}\right) dz\otimes d\overline{z}$. Consider the second order differential operator 
\begin{equation}
\Delta _{\nu }:=-4\left( 1-z\overline{z}\right) \left( \left( 1-z\overline{z}%
\right) \frac{\partial ^{2}}{\partial z\partial \overline{z}}-2\nu \overline{%
z}\frac{\partial }{\partial \overline{z}}\right) .  \tag{2.2.5}
\end{equation}
This is an elliptic and densely defined operator on the Hilbert space $L^{2}\left( \mathbb{D}, \mu_{\nu}\right)$, and admits a unique self-adjoint realization that we also denote $\Delta_{\nu }$ (\cite{Mo}). Moreover, its spectrum consists of $(i)$%
\textit{\ }a continuous part $\left[ 1,+\infty \right[ $ corresponding to \textit{scattering states}, $(ii)$ a finite number of eigenvalues (\textit{hyperbolic Landau levels}) of the form 
\begin{equation}\label{HLL}
\epsilon _{m}^{(\nu)}:=4m\left(2\nu -m-1\right) ,m=0,1,2,\cdots ,\left[ \nu -%
\frac{1}{2}\right] 
\end{equation}
with infinite multiplicities, provided that $2\nu >1$. To each eigenvalue $\epsilon_m^{(\nu)}$ corresponds eigenfunctions which are referred to as \textit{bound states}. Let
\begin{equation*}
\mathcal{A}_{\nu ,m}^{2}(\mathbb{D}):=\left\{ \Phi :\mathbb{D}\mathbf{\rightarrow }\mathbb{C}, \, \Phi \in L^{2}\left(\mathbb{D}, \mu_{\nu}\right), \, \, \Delta _{\nu }\Phi =\epsilon _{m}^{(\nu)}\Phi \right\}
\end{equation*}
be the eigenspace associated with $\epsilon _{m}^{\nu }$. Then an orthonormal basis of $\mathcal{A}_{\nu ,m}^{2}\left( \mathbb{D}\right)$ is given by
\begin{multline*}
\Phi _{k}^{(\nu ,m)}\left( z\right) =\left( -1\right) ^{k}\left( \frac{2\left(\nu -m\right) -1}{\pi }\right) ^{1/2} 
\left( \frac{k!\Gamma \left(2\left( \nu -m\right) +m\right) }{m!\Gamma \left( 2\left( \nu -m\right)+k\right) }\right) ^{1/2}
\\  \left( 1-z\overline{z}\right) ^{-m}\overline{z}^{m-k}P_{k}^{\left(m-k,2\left( \nu -m\right) -1\right) }\left( 1-2z\overline{z}\right) . 
\end{multline*}
Note that for the value $m=0$ associated with the ground state, the basis elements reduce to 
\begin{equation*}
\Phi _{k}^{(\nu ,0)}\left( z\right) =\left( 2\nu -1\right)^{1/2}\sqrt{\frac{\Gamma \left( 2\nu +k\right) }{\pi k!\Gamma(2\nu)}}z^{k}, \,\, k \geq 0,
\end{equation*}
and the $\mathcal{A}_{\nu ,0}^{2}$ is nothing else but the weighted Bergman space consisting of analytic functions $g$ in $\mathbb{D}$ such that
\begin{equation*}
\int\limits_{\mathbb{D}}\left| g\left( z\right) \right| ^{2}\mu_{\nu}(dz) \quad  < + \,\infty.
\end{equation*}
For that reason, the eigenspaces $\mathcal{A}_{\nu ,m}^{2}, m \geq 0$ were called in \cite{Mou} generalized Bergman spaces on the unit disk. 

In the sequel, we will be dealing with the following modification of the basis elements functions which takes into account the curvature of the Poincar\'e disc $\mathbb{D}_R$:

\begin{multline*}
\Phi _{j}^{(\nu, m, R)}\left( z\right) :=\left( \frac{\pi }{\left( 2\left( \nu R^{2}-m\right) -1\right)} 
\frac{\left( \max (m,j\right) )!\Gamma \left(2\left( \nu R^{2}-m\right) +\min (m,j\right) )}{\left( \min (m,j\right))!\Gamma \left(2\left( \nu R^{2}-m\right) +\max (m,j\right) )}\right) ^{-1/2}
\\
\frac{\left( -1\right) ^{\min \left( m,j\right) }}{\left(1-z\overline{z}R^{-2}\right) ^{m}}\left| \frac{z}{R}\right| ^{\left| m-j\right|}e^{-i\left( m-j\right) \arg z}P_{\min \left( m,j\right) }^{\left( \left|m-j\right| ,2\left( \nu R^{2}-m\right) -1\right) } 
\left( 1-2z\overline{z}R^{-2}\right).
\end{multline*}
Indeed, doing so enables us to recover analogous results in the flat (Euclidean) setting already derived in \cite{Dem-Mou}, \cite{Mou-Tou} via a geometrical contraction by letting $R \rightarrow \infty.$

\begin{remark}
The operator $\Delta _{\nu}$ can be mapped to the $\nu $-weight Maass Laplacian 
\begin{equation*}
y^{2}\left( \partial _{x}^{2}+\partial_{y}^{2}\right) - 2i\nu y\partial _{x}
\end{equation*}
on the Poincar\'{e} upper half-plane already studied in \cite{Com}. The physical meaning of the condition $\nu >1/2$ ensuring the existence of the discrete spectrum is that the magnetic field has to be strong enough to capture the particle in a closed orbit. In this case, the corresponding eigenfunctions are referred to as \textit{bound states} since a particle in such a state cannot leave it without an additional energy.
If this condition is not fulfilled, then the motion will be unbounded and the classical orbit of the particle will intercept the disk boundary whose points stands for $\left\{ \infty \right\}$ (\cite{Com}, p.189). 
\end{remark}

\section{Generalized negative binomial distribution: moment generating function and decomposition}
\noindent With the material introduced in the previous section, we are now ready to recall and analyze some probabilistic aspects of the generalized negative binomial distribution. In particular, we derive below its moment generating function whence we deduce its atomic decomposition. 

Let $\nu > 1/(2R^2)$ and $m=0,1,...,\left[\nu R^2 -1/2\right] $ be a fixed integer. Define the corresponding generalized NBCS by 
\begin{equation*}
\mid z,\nu ,m, R> := \left( \mathcal{N}_{\nu ,m}\left( z\right) \right) ^{-\frac{1}{2}}\sum\limits_{k=0}^{+\infty }\overline{\Phi _{k}^{(\nu, m, R)}\left( z\right)} \mid \phi_{k}>, \quad z \in \mathbb{D}_R,
\end{equation*}
where (\cite{Mou})
\begin{equation*}
\mathcal{N}_{\nu ,m, R}\left( z\right) =\pi ^{-1}(2\nu R^2 -2m-1)\left( 1-z\overline{z}\right) ^{-2\nu R^2},
\end{equation*}
is a normalizing factor. For any $j \geq 0$, the squared modulus of $\langle z, \nu, m, R | \phi_j\rangle$ gives the probability that $j$ photons are found in the state $\mid z,\nu ,m, R>$. This leads to the following definition:
\begin{definition}
\textit{Let }
The GNBD of parameters $(\nu, z, m, R)$ is defined by 
\begin{equation}\label{GNBD}
p_{j}^{(\nu, z, m, R)}:= \gamma _{j}^{(\nu, m, R)}\left( 1-\frac{|z|^2}{R^{2}}\right) ^{2\nu R^{2}-2m}\left( \frac{|z|^2}{R^{2}}\right) ^{\left| m-j\right| }
\left(P_{\frac{1}{2}\left( m+j-\left|m-j\right| \right) }^{\left( \left| m-j\right| ,2\nu R^{2}-2m-1\right)}\left( 1-\frac{2|z|^2}{R^{2}}\right) \right) ^{2}
\end{equation}
for $j \geq 0$, where 
\begin{equation}\label{Norm}
\gamma _{j}^{(\nu,m, R)}:=\frac{\Gamma \left( 1+\frac{1}{2}\left( m+j-\left|m-j\right| \right) \right) \Gamma \left(2\nu R^2-m+\frac{1}{2}\left(
\left| m-j\right| +j-m\right) \right) }{\Gamma \left( 1+\frac{1}{2}\left(m+j+|m-j|\right) \right) \Gamma \left( 2\nu R^2 -m-\frac{1}{2}(|m-j|+m-j)\right)}.
\end{equation}
\end{definition}
Note that 
\begin{equation*}
\gamma _{j}^{(\nu,m, R)} = \frac{j!\Gamma \left(2\nu R^2 -m\right) }{m!\Gamma\left(2\nu R^2 -2m+j\right)}
\end{equation*}
if $j \leq m$, while 
\begin{equation*}
\gamma _{j}^{(\nu,m, R)} = \frac{m!\Gamma\left(2\nu R^2 -2m+j\right)}{j!\Gamma \left(2\nu R^2 -m\right)}
\end{equation*}
otherwise. This simple observation together with another property satisfied by Jacobi polynomials are the main ingredients in the derivation of the moment generating function of the GNBD. 

\begin{proposition}
Let $|\xi| < 1$ be a complex number and write simply $\tau =  \tau(|z|, R) = |z|^2/R^2$ . Then the moment-generating function of the GNBD of parameters $(\nu,z, m, R)$ is given by
\begin{equation}\label{MGF}
G^{(\nu, \tau, m, R)}(\xi)=\left(\frac{1-\tau }{1-\tau \xi}\right) ^{2\nu R^2}\left( \frac{\left( \tau -\xi \right) \left( 1-\tau \xi \right) }{(1-\tau )^{2}}\right) ^{m}P_{m}^{\left(2\nu R^2-2m-1,0\right)} \left(1+ \frac{2\xi (1-\tau )^{2}}{\left( \tau -\xi \right) \left( 1-\tau \xi \right)}\right).  
\end{equation}
\end{proposition}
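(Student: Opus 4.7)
The strategy is to verify equation~\eqref{MGF} as an identity of formal power series in $\xi$, first by unpacking its right-hand side into an explicit polynomial expression and then by summing the left-hand side directly and matching term by term.

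\emph{Unpacking the right-hand side.} Set $v := (\tau-\xi)(1-\tau\xi)$. A direct calculation yields the algebraic identity $v + \xi(1-\tau)^2 = \tau(1-\xi)^2$, which in turn gives
\begin{equation*}
\frac{(1+y)-1}{2} = \frac{\xi(1-\tau)^2}{v}, \qquad \frac{(1+y)+1}{2} = \frac{\tau(1-\xi)^2}{v}.
\end{equation*}
Substituting these into the binomial expansion \eqref{DefJac} of $P_m^{(2\nu R^2-2m-1,0)}(1+y)$ and absorbing the prefactor $\bigl(v/(1-\tau)^2\bigr)^m$ of \eqref{MGF} converts the right-hand side into
\begin{equation*}
\Bigl(\tfrac{1-\tau}{1-\tau\xi}\Bigr)^{2\nu R^2}\sum_{k=0}^{m}\binom{2\nu R^2-m-1}{k}\binom{m}{k}\xi^{m-k}\tau^{k}\frac{(1-\xi)^{2k}}{(1-\tau)^{2k}}.
\end{equation*}
It suffices to show that this expression equals $\sum_{j\geq 0}p_j^{(\nu,z,m,R)}\xi^j$.

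\emph{Summing the left-hand side.} Split the series at $j=m$, using the two elementary forms of $\gamma_j^{(\nu,m,R)}$ recorded just after \eqref{Norm}. For $j=m+n$ with $n\geq 0$, combine the hypergeometric representation \eqref{RepJac} with Pfaff's transformation ${}_2F_1(a,b;c;z)=(1-z)^{-a}{}_2F_1\bigl(a,c-b;c;z/(z-1)\bigr)$ to obtain
\begin{equation*}
P_m^{(n,\,2\nu R^2-2m-1)}(1-2\tau) = \binom{m+n}{m}\sum_{k=0}^{m}\binom{m}{k}\frac{(m+1-2\nu R^2)_k}{(n+1)_k}\tau^{k}(1-\tau)^{m-k},
\end{equation*}
with a parallel finite identity for $j=m-k'$, $k'=1,\ldots,m$. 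The advantage of this Pfaff form is that the full $n$-dependence is now confined to the denominator Pochhammer $(n+1)_k$. Squaring the expansion, plugging it into $\sum_j p_j\xi^j$, and interchanging the finite inner sums with the outer sum over $n$ reduces the problem to evaluating
\begin{equation*}
\sum_{n\geq 0}\binom{m+n}{m}\frac{(2\nu R^2-m)_n}{(n+1)_k(n+1)_{k'}}(\tau\xi)^n,
\end{equation*}
a ${}_3F_2$ at argument $\tau\xi$ with a trailing numerator parameter equal to $1$. These collapse via the negative-binomial series $\sum_n (c)_n z^n/n!=(1-z)^{-c}$ together with a Chu--Vandermonde contraction to handle the two symmetric factors $(n+1)_k(n+1)_{k'}$, yielding $(1-\tau\xi)^{-2\nu R^2}$ times a polynomial in $\xi,\tau$ of total degree $2m$. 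Collecting the finite $j<m$ corrections with the $j\geq m$ main term reproduces the polynomial expansion obtained in the first step.

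The principal obstacle lies in the last move. Although each inner $n$-series is a manageable ${}_3F_2$, demonstrating that the weighted double sum over the pair of expansion indices $(k,k')$ coming from squaring the Jacobi polynomial telescopes into the symmetric polynomial of Step~1 requires both a careful application of contiguous hypergeometric identities (a Pfaff--Saalsch\"utz type specialization is the natural candidate) and delicate bookkeeping welding the finite $j<m$ contributions to the infinite tail $j\geq m$. An alternative, potentially cleaner route would be induction on $m$: a three-term contiguous relation between $\Phi_j^{(\nu,m,R)}$ and $\Phi_j^{(\nu,m\pm 1,R)}$ should translate into a recursion for $G^{(\nu,\tau,m,R)}(\xi)$, the base case $m=0$ reducing to the classical negative-binomial moment generating function $\bigl((1-\tau)/(1-\tau\xi)\bigr)^{2\nu R^2}$.
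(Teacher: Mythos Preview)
Your Step~1 (unpacking the right-hand side) is correct and in fact recovers exactly the polynomial form the paper records in its Remark on the contraction principle. The proof, however, is not complete: in Step~2 you square the Pfaff-transformed expansion of $P_m^{(n,\beta)}$, land on a double sum over $(k,k')$, and then explicitly leave the collapse of that double sum as an unproved ``principal obstacle,'' gesturing at Pfaff--Saalsch\"utz and unspecified contiguous relations. The alternative induction scheme is likewise only a suggestion. As written, this is a programme, not a proof.

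The paper avoids your obstacle with two moves you did not use. First, instead of squaring an expansion of $P_m^{(n,\beta)}$, it applies the index symmetry
\[
P_m^{(j-m,\,\beta)}(x)=\frac{j!\,\Gamma(\beta+m+1)}{m!\,\Gamma(\beta+j+1)}\Bigl(\tfrac{x-1}{2}\Bigr)^{m-j}P_j^{(m-j,\,\beta)}(x),
\]
which follows directly from \eqref{DefJac} and is valid for all $j\geq 0$. This lets one rewrite \emph{every} term $p_j^{(\nu,z,m,R)}\xi^j$, including those with $j<m$, in the common form $(P_j^{(m-j,\beta)}(1-2\tau))^2$ with a uniform normalising factor; the split at $j=m$ then cancels identically and the full generating series becomes a single sum over $j\geq 0$. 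Second, that single sum is a specialisation $x=y=1-2\tau$, $z=\xi/\tau$ of a known bilinear generating function for Jacobi polynomials with shifted first parameter (Srivastava--Rao), which evaluates directly to a ${}_2F_1$ and hence, via \eqref{RepJac}, to the Jacobi polynomial in \eqref{MGF}. No squaring, no ${}_3F_2$ double sums, no welding of the $j<m$ and $j\geq m$ regions: the index symmetry unifies the sum and the bilinear identity closes it in one stroke.
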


\begin{proof} From the very definition of the GNBD, we have
\begin{align}\label{MGS}
G^{(\nu, \tau, m, R)}(\xi) & = \sum_{j=0}^{+\infty }\xi ^{j}p_{j}^{(\nu, z, m, R)} \nonumber
\\& = \sum_{j=0}^{+\infty }\xi ^{j}\gamma _{j}^{(\nu, m, R)} \tau^{\left| m-j\right|}(1-\tau) ^{2\nu R^2-2m}
\left(P_{\frac{1}{2}\left( m+j-\left|m-j\right| \right) }^{\left( \left| m-j\right| ,2\nu R^2 -2m-1\right)}\left( 1- 2\tau \right) \right) ^{2}.
\end{align}
If $m=0$, then the statement of the proposition is already known. As a matter of fact, assume $m \geq 1$ and split the series \eqref{MGS} into two parts according to whether $\{j \leq m-1\}$ or $\{j \geq m\}$:
\begin{equation}
G^{(\nu, \tau, m, R)}(\xi)=G_1^{(\nu, \tau, m, R)}\left(\xi\right) + G_2^{(\nu,\tau, m, R)}\left(\xi\right),
\end{equation}
where 
\begin{align*}
G_1^{(\nu, \tau, m, R)}\left(\xi \right) &:= \frac{\Gamma \left(2\nu R^2-m\right) }{m!} \sum_{j=0}^{m-1}\frac{j!}{\Gamma\left(2\nu R^2-2m+j\right)}\left( 1-\tau \right) ^{2\nu R^2-2m}\tau ^{m-j}
\left(P_{j}^{\left(m-j,2\nu R^2 - 2m-1\right) }\left( 1-2\tau \right) \right) ^{2}\xi ^{j}
\\& -\frac{m!}{\Gamma \left(2\nu R^2-m\right)}\sum_{j=0}^{+\infty} \frac{\Gamma\left(2\nu R^2-2m+j\right)}{j!} \left( 1-\tau \right) ^{2\nu R^2-2m}\tau ^{j-m}\left(P_{m}^{\left(j-m,2\nu R^2-2m-1\right) }\left( 1-2\tau \right) \right) ^{2}\xi ^{j}, 
\end{align*}
and 
\begin{equation*}
G_2^{(\nu, \tau, m, R)}\left(\xi\right) := \frac{m!}{\Gamma \left(2\nu R^2-m\right)}\sum_{j=0}^{+\infty} 
\frac{\Gamma\left(2\nu R^2-2m+j\right)}{j!} \left(1-\tau \right) ^{2\nu R^2-2m}\tau ^{j-m}\left( P_{m}^{\left( j-m,2\nu R^2-2m-1\right) }\left(1-2\tau \right) \right) ^{2}\xi ^{j}.
\end{equation*}
Now, the expansion \eqref{DefJac} leads to the following transformation (see also \cite{AAR}, p.220)
\begin{equation*}
P_m^{(j-m, 2\nu R^2-2m-1)}(x) = \frac{j!\Gamma(2\nu R^2- m)}{m!\Gamma(2\nu R^2-2m+j)}\left(\frac{x-1}{2}\right)^{m-j}P_j^{(m-j, 2\nu R^2-2m-1)}(x)
\end{equation*}
which readily implies that $G_1^{(\nu, \tau, m, R)}(\xi) = 0$. Again, the same transformation yields 
\begin{equation*}
G_2^{(\nu, \tau, m, R)}\left(\xi\right) = \frac{\Gamma \left(2\nu R^2-m\right)}{m!}\left(1-\tau \right) ^{2\nu R^2-2m} 
\sum_{j=0}^{+\infty} \frac{j!}{\Gamma\left(2\nu R^2-2m+j\right)} \tau ^{m-j}\left( P_j^{(m-j,2\nu R^2 -2m-1)}\left(1-2\tau \right) \right) ^{2}\xi ^{j}.
\end{equation*}
Finally, recall the generating function (see eq. (3.5) in \cite{Sri-Rao})
\begin{multline*}
\sum_{j=0}^{+\infty }\frac{j!}{\left(1+\beta\right) _{j}} P_{j}^{(\gamma -j,\beta)}\left(x\right) P_{j}^{(\gamma -j,\beta)}\left( y\right) z^j= 
\left(1-z\right)^{\gamma }\left(1-\frac{\left(x-1\right) \left( y-1\right) z}{4}\right) ^{-1-\gamma -\beta}
\\ {}_2F_1\left(-\gamma, 1+\gamma +\beta,1+\beta; - \frac{\left(x+1\right) \left(y+1\right)z}{\left(1-z\right) \left(4-\left(x-1\right) \left(y-1\right)z\right) }\right)   
\end{multline*}
which converges absolutely at least in a small open disc centered at the origin. Since $\tau > 0$ is fixed, then we can choose non real $\xi$ with small enough modulus and use the above generating function together with \eqref{RepJac} in order to derive the desired expression. But,
\begin{equation*}
\xi \mapsto \left( \frac{\left( \tau -\xi \right) \left( 1-\tau \xi \right) }{(1-\tau )^{2}}\right) ^{m}P_{m}^{\left(2\nu-2m-1,0\right)} \left(1+ \frac{2\xi (1-\tau )^{2}}{\left( \tau -\xi \right) \left( 1-\tau \xi \right)}\right)
\end{equation*}
is well defined and analytic in the open unit disc. Hence, \eqref{MGF} extends analytically to $\xi \in \mathbb{D}$.   
\end{proof}

\begin{remark}
$G^{(\nu, \tau, m, R)}$ is a continuous function in the closed unit disc and as such, \eqref{MGF} remains valid on the unit circle $\{|\xi| = 1\}$. 
\end{remark}
\begin{remark}[Contraction principle]
Using the expansion \eqref{DefJac}, we get
\begin{multline*}
\left( \frac{\left( \tau -\xi \right) \left( 1-\tau \xi \right) }{(1-\tau )^{2}}\right) ^{m}P_{m}^{\left(2\nu-2m-1,0\right)} \left(1+ \frac{2\xi (1-\tau )^{2}}{\left( \tau -\xi \right) \left( 1-\tau \xi \right)}\right)
 = \xi^m \sum_{j=0}^m\binom{2\nu R^2-m-1}{j}\binom{m}{j}  \\ \left(\frac{\tau}{(1-\tau)^2}\right)^j\left(\frac{(1-\xi)^2}{\xi}\right)^j.
 \end{multline*}
 Letting $R \rightarrow \infty$ for fixed $|z|$ then $\tau \rightarrow 0$ and the right-hand side of \eqref{MGF} tends to 
\begin{equation*}
e^{2\nu |z|^2 (\xi-1)}\xi^m \sum_{j=0}^m\frac{1}{j!}\binom{m}{j} \left(2\nu\frac{(1-\xi)^2}{\xi}\right)^j = e^{2\nu |z|^2 (\xi-1)\tau}\xi^m L_m^{(0)}\left(-2\nu|z|^2\frac{(1-\xi)^2}{\xi}\right),
\end{equation*}
which is the moment generating function of the generalized Poisson distribution of parameter $\lambda = 2\nu|z|^2$ (\cite{Dem-Mou}). This limiting result is not surprising and is rather expected since the curvature of the Poincar\'e disc $\mathbb{D}_R$ of radius $R$ tends to zero. 
\end{remark}

\subsection{Atomic decomposition}
From now on, we will assume $R=1$ and delete the superscript $R$ from our previous notations. In this paragraph, we decompose the GNBD attached with a given hyperbolic Landau level $m \geq 1$ as a perturbation of the NBD ($m=0$) by a finitely-supported (signed) measure. To proceed, we appeal to the last remark which shows that 
\begin{equation*}
G^{(\nu, \tau, m)}(\xi) = \left(\frac{1-\tau }{1-\tau \xi }\right) ^{2\nu}\xi^m \sum_{j=0}^m\binom{2\nu-m-1}{j}\binom{m}{j}  \left(\frac{\tau}{(1-\tau)^2}\right)^j\left(\frac{(1-\xi)^2}{\xi}\right)^j.
\end{equation*}
Consequently, the Fourier transform of the GNBD of parameters $(\nu,m)$ reads 
\begin{equation*}
G^{(\nu, \tau, m)}(e^{iu})=\left(\frac{1-\tau }{1-\tau e^{iu}}\right) ^{2\nu}e^{ium} \sum_{j=0}^m\binom{2\nu-m-1}{j}\binom{m}{j} \left(-\frac{4\tau}{(1-\tau)^2}\right)^j \sin^{2j}(u/2).
\end{equation*}
In the right-hand side of the last equation, the two first factors are the Fourier transforms of a NBD of parameters $(\nu,\tau)$ and of a Dirac mass at $m$. As to the remaining sum finite sum, it is the Fourier transform of a finitely-supported signed atomic measure. Indeed, if $m \geq 1$ then the linearization formula (\cite{Gra-Ryz}, p.31)
\begin{equation}\label{Linea}
4^j\sin ^{2j}(u/2)=\binom{2j}{j} + 2\sum_{k=1}^{j}\left(-1\right)^{k}\binom{2j}{j-k} \cos(ku){\bf 1}_{\{j \geq 1\}}.
\end{equation}
yields
\begin{align*}
\sum_{j=0}^m\binom{2\nu-m-1}{j}\binom{m}{j} &\left(-\frac{4\tau}{(1-\kappa)^2}\right)^j \sin^{2j}(u/2) = 1+ 
\\& 2\sum_{k=1}^m(-1)^k\left\{\sum_{j=k}^m \binom{2\nu-m-1}{j}\binom{m}{j} \binom{2j}{j-k} \left(-\frac{\tau}{(1-\tau)^2}\right)^j\right\}\cos(ku)
\end{align*}
which is the Fourier transform of  
\begin{align*}
\delta_0 + \sum_{k=1}^m(-1)^k\left\{\sum_{j=k}^m \binom{2\nu-m-1}{j}\binom{m}{j} \binom{2j}{j-k} \left(-\frac{\tau}{(1-\tau)^2}\right)^j\right\}[\delta_k + \delta_{-k}].
\end{align*}
Set 
\begin{equation*}
Q_{k}^{(\nu,m)}(x) := (-1)^k\sum_{j=k}^m \binom{2\nu-m-1}{j}\binom{m}{j} \binom{2j}{j-k}(-x)^j, \quad 1 \leq k \leq m, \, \, Q_{0}^{(\nu,m)}(x) := 1
\end{equation*}
then we have proved that
\begin{proposition}\label{AD}
The GNBD of parameters $(\nu, \tau, m)$ admits the following decomposition: 
\begin{multline*}
\left(\sum_{j \geq 0} p_{j}^{(\nu,\tau, 0)}\delta_j\right) \star \delta_m \star \left\{\delta_0+\sum_{k=1}^mQ_{k}^{(\nu,m)}\left(\frac{\tau}{(1-\tau)^2}\right)[\delta_{k} + \delta_{-k}]\right\} = 
 \left(\sum_{j \geq 0} p_{j}^{(\nu,\tau, 0)}\delta_j\right) \\ \star \sum_{k=0}^{2m}Q_{|k-m|}^{(\nu,m)}\left(\frac{\tau}{(1-\tau)^2}\right)\delta_k.
\end{multline*}
\end{proposition}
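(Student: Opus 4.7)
The plan is to read off the decomposition directly from the characteristic function of the GNBD, i.e. to evaluate the moment generating function at $\xi = e^{iu}$ and identify each factor as the characteristic function of a known measure. By the remark preceding Proposition \ref{AD}, we have
\begin{equation*}
G^{(\nu,\tau,m)}(e^{iu}) = \left(\frac{1-\tau}{1-\tau e^{iu}}\right)^{2\nu} e^{ium} \sum_{j=0}^m \binom{2\nu-m-1}{j}\binom{m}{j}\left(-\frac{4\tau}{(1-\tau)^2}\right)^j \sin^{2j}(u/2).
\end{equation*}
The first factor is the characteristic function of the NBD $\sum_{j\ge 0} p_j^{(\nu,\tau,0)}\delta_j$, and the factor $e^{ium}$ is that of $\delta_m$. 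So the only work is to show that the remaining finite sum is the characteristic function of the signed atomic measure appearing in the statement.

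For this I would apply the linearization identity \eqref{Linea} to each term with $j \ge 1$ and then swap the order of summation: the inner $k$-sum coming from each $\sin^{2j}(u/2)$ would be regrouped as an outer $k$-sum with an inner $j$-sum running from $j=k$ to $m$. The constant term contributes $\sum_{j=0}^m \binom{2\nu-m-1}{j}\binom{m}{j}\binom{2j}{j}(-\tau/(1-\tau)^2)^j = Q_0^{(\nu,m)} = 1$, while for $1 \le k \le m$ the coefficient of $2\cos(ku)$ is exactly $Q_k^{(\nu,m)}(\tau/(1-\tau)^2)$ by definition of $Q_k^{(\nu,m)}$, the sign being absorbed by the $(-1)^k$ already included. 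Since $2\cos(ku) = e^{iku}+e^{-iku}$, this is precisely the characteristic function of
\begin{equation*}
\delta_0 + \sum_{k=1}^m Q_k^{(\nu,m)}\!\left(\frac{\tau}{(1-\tau)^2}\right)[\delta_k + \delta_{-k}].
\end{equation*}

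By the convolution theorem, taking the inverse Fourier transform of the product of these three characteristic functions yields the first form of the decomposition in the proposition. For the second form, I would simply carry out the convolution with $\delta_m$, which shifts each atom: the atom at $-k$ moves to $m-k$, the atom at $0$ stays at $m$, and the atom at $+k$ moves to $m+k$, for $k=0,\ldots,m$. Relabelling the resulting positions as $\ell = 0,1,\ldots,2m$, the coefficient at $\ell$ equals $Q_{|\ell-m|}^{(\nu,m)}(\tau/(1-\tau)^2)$, which is the stated formula.

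The only non-routine point is the swap of the $j$-sum and $k$-sum in the linearization step, together with the identification of the resulting $k$-th Fourier coefficient as $Q_k^{(\nu,m)}$; everything else is direct bookkeeping and an application of the convolution theorem.
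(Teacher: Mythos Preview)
Your argument follows the paper's derivation essentially step for step: evaluate the moment generating function on the unit circle, recognise the first two factors as the characteristic functions of the NBD and of $\delta_m$, apply the linearisation formula \eqref{Linea} to the remaining finite sum, swap the $j$- and $k$-sums, and read off the atomic measure. The final re-indexing after convolving with $\delta_m$ is also exactly what the paper intends. So methodologically you are doing the same thing.

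There is, however, a genuine gap in the step where you assert
\[
\sum_{j=0}^{m}\binom{2\nu-m-1}{j}\binom{m}{j}\binom{2j}{j}\left(-\frac{\tau}{(1-\tau)^{2}}\right)^{j}=1.
\]
This is the constant Fourier coefficient of the third factor, and you equate it with $Q_{0}^{(\nu,m)}=1$ without justification. In fact the identity is false: already for $m=1$ the left-hand side equals $1-2(2\nu-2)\dfrac{\tau}{(1-\tau)^{2}}$, which is not $1$ unless $\tau=0$ or $\nu=1$. (What \emph{is} true is that the full third factor evaluated at $u=0$ equals $1$, i.e.\ the signed measure has total mass $1$; that is a weaker statement.) The paper's text makes the very same unproved assertion when it writes the sum as ``$1+2\sum_{k=1}^{m}\dots\cos(ku)$'' and then defines $Q_{0}^{(\nu,m)}(x):=1$, so you have faithfully reproduced an oversight already present there. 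The decomposition becomes correct if one instead lets $Q_{0}^{(\nu,m)}(x)$ be given by the $k=0$ case of the general formula, namely $\sum_{j=0}^{m}\binom{2\nu-m-1}{j}\binom{m}{j}\binom{2j}{j}(-x)^{j}$; with that convention your argument goes through without change.
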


\begin{remark}
Using the Legendre duplication formula \eqref{Leg}, the polynomials $Q_{k,m}^{(\nu)}$ may be expressed through a ${}_3F_2$ hypergeometric polynomial: 
\begin{align*}
Q_{k}^{(\nu,m)}(x) &= x^k\sum_{j=0}^{m-k} \binom{2\nu-m-1}{j+k}\binom{m}{j+k} \binom{2j+2k}{j}(-x)^j
\\& = \frac{m!(4x)^k}{\sqrt{\pi}}\sum_{j=0}^{m-k} \frac{\Gamma(2\nu-m)}{\Gamma(j+k+1)\Gamma(2\nu-m-k-j)}\frac{\Gamma(j+k+1/2)(-4x)^j}{\Gamma(j+2k+1)(m-k-j)!j!}
\\& = \binom{m}{k}(m-2\nu)_k (-x)^k\sum_{j=0}^{m-k} (m+k-2\nu)_j \frac{(k-m)_j(k+1/2)_j}{(k+1)_j(2k+1)_j} \frac{(-x)^j}{j!}
\\& = \binom{m}{k}(m-2\nu)_k \,(-x)^k \, {}_3F_2(k-m, m+k-2\nu, k+(1/2), k+1, 2k+1; -x).
\end{align*}
\end{remark}

\subsection{Photon counting statistics}
To define a measure of non classicality of a quantum state, one can follow several different approach. An earlier attempt was initiated by Mandel \cite{Man} who investigated radiation fields and introduced for any random variable the parameter
\begin{equation*}
Q = Q(X) = \frac{\mathbb{V}ar(X)}{\mathbb{E}(X)}-1
\end{equation*}
to measure the deviation of the photon number statistics from the Poisson distribution for which $Q=0$. If $Q < 0$, then the underlying statistics are said to be \textit{sub-Poissonian}
statistics and describes the anti-bunching of the light which reveals the quantum nature of light. Super-Poisson statistics corresponds rather to $Q > 0$ and the bunching phenomenon occurs. In our setting, the random variable $X_{0}$ associated with the lowest hyperbolic Landau level $m=0$ obeys the NBD and it is already known that the photon counting statistics are in this case super-Poissonian (\cite{GJT}). As a matter of fact, the random variable $X_0$ describes a thermal light field with bunched photon spacing and a larger number of fluctuations than a Poissonian (coherent) state. 
For higher hyperbolic Landau levels $\epsilon_m, m \geq 1$, the situation is very different since the proposition below shows that the three regimes (anti-bunching, Poissonian, bunching) are possible according to the values of the light intensity $\tau$. In particular, an anti-bunching region emerges for small enough $\tau$ 
while the Poissonian regime corresponds to a circle in the Poincar\'e disc.
\begin{proposition}
For any $\nu > 1/2$ and any $m=0,1,2,...,\left\lfloor \nu-(1/2)\right\rfloor$, there exists a non negative real number $\rho = \rho(\nu,m)$ such that the photon counting statistics are 
\begin{itemize}
\item sub-Poissonian $z\in \mathbb{D}(0,\rho)$. The corresponding states $\mid z,\nu ,m,R>$ are non-classical (anti-bunching).
\item Poissonian for $z \in \partial \mathbb{D}(0,\rho)$. Here, $\mid z,\nu ,m,R>$ becomes pure coherent.
\item Super-Poissonian for $z \notin \overline{\mathbb{D}(0,\rho)}$. For such complex numbers, the states $\mid z,\nu ,m,R>$ describe a thermal light (bunching).
\end{itemize}
\end{proposition}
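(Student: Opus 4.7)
The plan is to extract $\mathbb{E}(X_m)$ and $\mathbb{V}ar(X_m)$ from the closed expression \eqref{MGF} of the moment generating function, compute the Mandel parameter $Q$ as an explicit function of $\tau=|z|^2\in[0,1)$, and analyze its sign.

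First I would use the factorization from the Contraction Principle remark (with $R=1$) to write $G^{(\nu,\tau,m)}(\xi)=f(\xi)\,g(\xi)\,h(\xi)$, where
\begin{equation*}
f(\xi)=\left(\frac{1-\tau}{1-\tau\xi}\right)^{2\nu},\qquad g(\xi)=\xi^m,\qquad h(\xi)=\sum_{j=0}^{m} c_j\,\frac{(1-\xi)^{2j}}{\xi^j},
\end{equation*}
with $c_j=\binom{2\nu-m-1}{j}\binom{m}{j}\bigl(\tau/(1-\tau)^2\bigr)^j$. Since $f(1)=g(1)=h(1)=1$, the values at $\xi=1$ of the first two derivatives of $G$ assemble from those of $f,g,h$ in a very simple way. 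The key observation is that, near $\xi=1$, the $j$-th summand of $h$ vanishes to order $2j$; hence $h'(1)=0$ and only $j=1$ contributes to $h''(1)=2c_1=2m(2\nu-m-1)\tau/(1-\tau)^2$. Routine computation then gives
\begin{equation*}
\mathbb{E}(X_m)=G'(1)=m+\frac{2\nu\tau}{1-\tau},\qquad \mathbb{V}ar(X_m)=G''(1)+G'(1)-G'(1)^{2}=\frac{2\tau\bigl[\nu+m(2\nu-m-1)\bigr]}{(1-\tau)^2}.
\end{equation*}

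From these I obtain the clean expression
\begin{equation*}
Q(\tau)+1=\frac{\mathbb{V}ar(X_m)}{\mathbb{E}(X_m)}=\frac{2\tau\bigl[\nu+m(2\nu-m-1)\bigr]}{(1-\tau)\bigl[m+(2\nu-m)\tau\bigr]}.
\end{equation*}
The case $m=0$ reduces to $Q(\tau)=\tau/(1-\tau)\geq 0$, so $\rho(\nu,0)=0$. For $m\geq 1$ I would verify the three behaviours: $Q(0)=-1<0$ (the numerator vanishes while the denominator equals $m>0$); $Q(\tau)\to+\infty$ as $\tau\to 1^-$ (the factor $1-\tau$ vanishes in the denominator while $m+(2\nu-m)\tau$ stays positive thanks to $2\nu-m>0$); and strict monotonicity of $Q$ on $[0,1)$. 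For this last point, the logarithmic derivative
\begin{equation*}
\frac{d}{d\tau}\log(Q+1)=\frac{1}{\tau}+\frac{1}{1-\tau}-\frac{2\nu-m}{m+(2\nu-m)\tau}=\frac{1}{\tau(1-\tau)}-\frac{2\nu-m}{m+(2\nu-m)\tau}
\end{equation*}
has the sign of $m+(2\nu-m)\tau^{2}>0$, which settles monotonicity.

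The intermediate value theorem then furnishes a unique $\rho=\rho(\nu,m)\in(0,1)$ with $Q(\rho^{2})=0$, and strict monotonicity forces $Q<0$ for $|z|<\rho$ and $Q>0$ for $|z|>\rho$, exactly the sub-Poissonian / Poissonian / super-Poissonian trichotomy claimed. (An explicit formula $\rho^2=-m+\sqrt{m^2+m/(2\nu-m)}$ drops out by solving the quadratic $(2\nu-m)\tau^{2}+2m(2\nu-m)\tau-m=0$, and the bound $2\nu\geq 2m+1$ guarantees $\rho^2<1$.) I expect the only delicate point to be the derivative bookkeeping for $h$; the factorization above is what makes it transparent, because $h'(1)=0$ kills the cross-term $2f'(1)h'(1)$ and $h''(1)$ is read off from the single $j=1$ summand.
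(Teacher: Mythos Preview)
Your proposal is correct and follows essentially the same route as the paper: compute $\mathbb{E}(X_m)$ and $\mathbb{V}ar(X_m)$ from the moment generating function (the paper simply states the resulting formulas, while you make the derivative bookkeeping explicit via the factorization $G=fgh$), form the Mandel parameter, and reduce the sign analysis to the quadratic $(2\nu-m)\tau^{2}+2m(2\nu-m)\tau-m$ in the numerator. The only minor addition on your side is the monotonicity argument via the logarithmic derivative, which the paper does not use; it instead reads off directly that the product of the roots of the quadratic equals $-m/(2\nu-m)\in(-1,0]$, giving the unique root $\rho(\nu,m)=\sqrt{m^{2}+m/(2\nu-m)}-m$ in $[0,1)$.
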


\begin{proof} 
From the expression we obtained for the moment generating function, we readily derive
\begin{equation*}
\mathbb{E}(X) = \frac{2\tau \nu}{1-\tau }+m
\end{equation*}
and
\begin{equation*}
\mathbb{V}ar(X) = \frac{2\tau}{(1-\tau )^{2}}(\nu+ m(2\nu-m-1)).
\end{equation*}
As a result, the Mandel parameter is given by 
\begin{equation}
Q=\frac{2\tau(\nu + m(2\nu-m-1)}{(1-\tau)(2\tau\nu + m(1-\tau))} - 1 = \frac{(2\nu-m)\tau^2 + 2m(2\nu-m)\tau-m}{(1-\tau)(2\tau\nu + m(1-\tau))}.
\end{equation}
Since $\tau = |z|^2 \in (0,1)$, then we only need to study the sign of the numerator of $Q$ with respect to the variable $\tau$. Note that the product of the roots is 
\begin{equation*}
-\frac{m}{2\nu-m} \in (-1,0]
\end{equation*}
so that there is unique positive root $\rho(\nu,m)$ lying in $[0,1)$: 
\begin{equation*}
\rho(\nu,m) = \frac{\sqrt{m^2(2\nu-m)^2 + m(2\nu-m)} - m(2\nu-m)}{2\nu-m} = \sqrt{m^2 + \frac{m}{2\nu-m}} - m.
\end{equation*}
 The three assertions of the proposition then follow from straightforward computations. 
\end{proof}
\begin{remark}
Considering $\rho(\nu,m)$ as a function of a real variable $m \in [0, \nu-1/2]$, then its derivative is given by 
\begin{equation*}
\left(m^2 + \frac{m}{2\nu-m}\right)^{-1/2}\left[m + \frac{\nu}{(2\nu-m)^2} - \left(m^2 + \frac{m}{2\nu-m}\right)^{1/2}\right]. 
\end{equation*}
But $2\nu/(2\nu-m) \geq 1$ therefore the term betwenn brackets is positive. It follows that the radius of the anti-bunching region $\rho(\nu,m)$ stretches as $m$ approaches $[\nu-(1/2)]$. Figure 1 below shows the increase of $m \mapsto \rho^2(11/2,m)$:
\begin{figure}[H]
\centering
\includegraphics[width=0.50\textwidth]{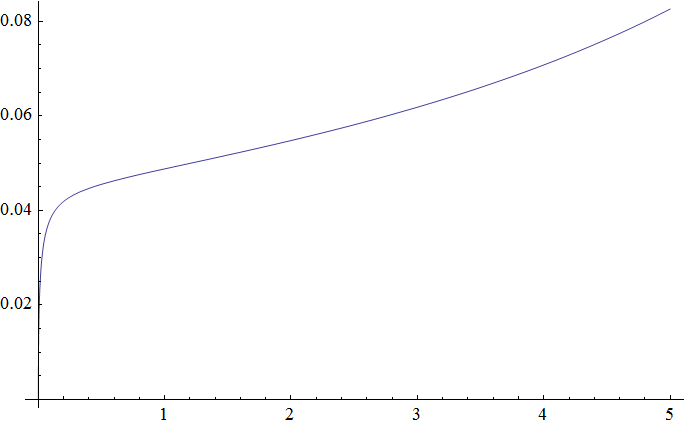}
\caption{{Square of the Anti-bunching radius $\rho(11/2,m)$}}
\end{figure}
\end{remark}

\section{L\'evy-Kintchine representation and Quasi-Infinite divisibility}
Recall that a random variable $Y$ is infinitely divisible if for any $n \geq 2$, there exist $n$ independent and identically distributed random variables $(Y_k^{(n)})_{k=1}^n$ such that (\cite{Sato}, \cite{Ste-VH})
\begin{equation*}
Y = \sum_{k=1}^nY_k^{(n)}.
\end{equation*}
For this property to hold, it is necessary that the characteristic function of $Y$ does not vanish (\cite{Ste-VH}, CH.IV, Proposition 2.4). In order to check whether this property holds or not for the GNBD, we first derive from \eqref{MGF} its characteristic function 
\begin{equation*}
G^{(\nu,\tau, m)}\left(e^{iu}\right) = \left(\frac{1-\tau }{1-\tau e^{iu} }\right) ^{2\nu}e^{imu} \left( \frac{\left(\tau e^{-iu} -1 \right) \left( 1-\tau e^{iu} \right) }{(1-\tau )^{2}}\right) ^{m}P_{m}^{\left(2\nu-2m-1,0\right)} \left(1+ \frac{2(1-\tau )^{2}}{\left(\tau e^{-iu} -1 \right) \left( 1-\tau e^{iu} \right)}\right)
\end{equation*}
for any $u \in \mathbb{R}$ then recall that 
\begin{equation*}
u \mapsto  \left(\frac{1-\tau }{1-\tau e^{iu} }\right) ^{2\nu}
\end{equation*}
is the characteristic function of the NBD which is already known to be infinitely divisible (\cite{Sato}, \cite{Ste-VH}). Moreover, 
\begin{equation}\label{InfiDiv}
\left(\frac{1-\tau }{1-\tau e^{iu} }\right) ^{2\nu} = \exp\left\{\int \left(e^{iux}-1\right) \sum_{j \geq 1}\frac{\tau^j}{j}\delta_j(dx)\right\}.
\end{equation}
As to the product 
\begin{equation*}
\left( \frac{\left(\tau e^{-iu} -1 \right) \left( 1-\tau e^{iu} \right) }{(1-\tau )^{2}}\right)^{m}P_{m}^{\left(2\nu-2m-1,0\right)} \left(1+ \frac{2(1-\tau )^{2}}{\left(\tau e^{-iu} -1 \right) \left( 1-\tau e^{iu}\right)}\right), 
\end{equation*}
we use \eqref{Fact} in order to write it as 
\begin{equation*}
\frac{(2\nu-m)_m}{2^{m}m!} \left( \frac{\left(\tau e^{-iu} -1 \right) \left( 1-\tau e^{iu} \right) }{(1-\tau )^{2}}\right)^{m} \prod_{i=1}^m \left(1+ \frac{2(1-\tau )^{2}}{(\tau e^{-iu} -1)(1-\tau e^{iu})} - x_i^{(m)}\right)
\end{equation*}
where $-1 < x_1^{(m)} < \dots < x_m^{(m)} < 1$ are the simple zeros of $P_{m}^{\left(2\nu-2m-1,0\right)}$. Equivalently, 
\begin{multline}\label{A0}
\left(\frac{\left(\tau e^{-iu} -1 \right) \left( 1-\tau e^{iu} \right) }{(1-\tau )^{2}}\right)^{m}P_{m}^{\left(2\nu-2m-1,0\right)} \left(1+ \frac{2(1-\tau )^{2}}{\left(\tau e^{-iu} -1 \right) \left( 1-\tau e^{iu}\right)}\right) = \frac{(2\nu-m)_m}{m!} 
\\  \prod_{i=1}^m \left(1 -\frac{1-x_i^{(m)}}{2}\frac{(1+\tau^2-2\tau\cos u)}{(1-\tau)^2} \right). 
\end{multline}
Since 
\begin{equation*}
\frac{(1+\tau^2-2\tau\cos u)}{(1-\tau)^2} \geq 1
\end{equation*}
and $(1-x_i^{(m})/2 \in (0,1)$ for any $1 \leq i \leq m$, then the characteristic function of the GNBD does not vanish for any $u \in \mathbb{R}$ provided that 
\begin{equation}\label{Equa1}
\frac{(1+\tau^2-2\tau\cos u)}{(1-\tau)^2} < \frac{2}{(1-x_1^{(m)})}. 
\end{equation}
But 
\begin{equation*}
\max_{u \in [-1,1]} \frac{(1+\tau^2-2\tau\cos u)}{(1-\tau)^2} = \frac{(1+\tau)^2}{(1-\tau)^2}
\end{equation*}
therefore \eqref{Equa1} is equivalent to
\begin{equation}\label{Equa2}
\tau < \frac{\sqrt{2} - \sqrt{1-x_1^{(m)}}}{\sqrt{2} + \sqrt{1-x_1^{(m)}}}.
\end{equation}
Assuming \eqref{Equa2} holds, we prove the following:  

\begin{proposition}\label{prop4}
There exist signed measures $\mu_n^{(\nu, \tau, m)}, 1 \leq n \leq m$ satisfying 
\begin{equation}\label{QuasiInfi}
\mu_n^{(\nu,\tau,m)}\{0\} = 0, \quad \int (1 \wedge x^2) \mu_n^{(\nu,\tau,m)}(dx) \, < \, \infty,
\end{equation}
and such that 
\begin{equation*}
\exp\left\{\int \left(e^{iux}-1\right) \sum_{n=1}^m\mu_k(dx)\right\} = \left( \frac{\left(\tau e^{-iu} -1 \right) \left( 1-\tau e^{iu} \right) }{(1-\tau )^{2}}\right)^{m}P_{m}^{\left(2\nu-2m-1,0\right)} \left(1+ \frac{2(1-\tau )^{2}}{\left(\tau e^{-iu} -1 \right) \left( 1-\tau e^{iu}\right)}\right). 
\end{equation*}
\end{proposition}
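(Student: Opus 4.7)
The strategy is to exploit the factorization already recorded in \eqref{A0}, which displays the right-hand side of the identity as
\[
\frac{(2\nu-m)_m}{m!}\prod_{i=1}^m f_i(u), \qquad f_i(u) := A_i + B_i\cos u,
\]
with $A_i := 1 - c_i(1+\tau^2)/(1-\tau)^2$, $B_i := 2c_i\tau/(1-\tau)^2$ and $c_i := (1-x_i^{(m)})/2 \in (0,1)$. A direct extremum check shows that hypothesis \eqref{Equa2} is equivalent to $A_i > B_i > 0$ for every $i$; in particular $f_i(u) > 0$ on $\mathbb{R}$, so each $\log f_i$ admits a continuous real branch.

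The next step is to recognise each trigonometric polynomial $f_i$ as a squared modulus on the circle: I seek $K_i > 0$ and $r_i \in (-1,0)$ with
\[
f_i(u) = K_i\,\bigl|1 - r_i e^{iu}\bigr|^2 = K_i(1 + r_i^2 - 2r_i\cos u).
\]
Matching coefficients yields $K_i(1+r_i^2) = A_i$ and $-2K_i r_i = B_i$, hence the quadratic $r_i^2 + (2A_i/B_i)\,r_i + 1 = 0$. Its discriminant is non-negative precisely under the inequality $A_i \geq B_i$ secured by \eqref{Equa2}; the two roots are reciprocal by Vieta's relation and both share the sign of $-A_i/B_i < 0$. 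Picking the root lying in $(-1,0)$ makes $K_i = -B_i/(2r_i) > 0$ automatic.

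With this factorization in hand, the proof is completed by a Mercator expansion. For $|r_i| < 1$,
\[
\log f_i(u) - \log f_i(0) = \log(1 - r_i e^{iu}) + \log(1 - r_i e^{-iu}) - 2\log(1-r_i) = -\sum_{k \geq 1}\frac{r_i^k}{k}\bigl[(e^{iku}-1)+(e^{-iku}-1)\bigr].
\]
Summing over $i$ and using the normalization $\frac{(2\nu-m)_m}{m!}\prod_i f_i(0) = 1$, which follows from \eqref{Fact} combined with the standard evaluation $P_m^{(2\nu-2m-1,0)}(-1) = (-1)^m$, the logarithm of the full right-hand side equals $\sum_{n=1}^m \int(e^{iux}-1)\,\mu_n^{(\nu,\tau,m)}(dx)$ where
\[
\mu_n^{(\nu,\tau,m)} := -\sum_{k \geq 1}\frac{r_n^k}{k}\,(\delta_k + \delta_{-k}).
\]

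Verifying \eqref{QuasiInfi} is then immediate: each $\mu_n$ is supported in $\mathbb{Z}\setminus\{0\}$, and $\int(1\wedge x^2)\,|\mu_n|(dx) = 2\sum_{k \geq 1} |r_n|^k/k = -2\log(1-|r_n|) < \infty$ since $|r_n| < 1$. The only step that genuinely uses the hypothesis \eqref{Equa2} is the existence and localization of $r_i$ in $(-1,0)$; the remainder is bookkeeping with absolutely convergent series.
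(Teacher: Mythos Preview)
Your argument is correct and the overall logic is airtight: condition \eqref{Equa2} gives $A_i>B_i>0$, which makes the quadratic for $r_i$ produce reciprocal negative roots, and choosing $r_i\in(-1,0)$ lets the Mercator series converge absolutely. The normalization step via $P_m^{(2\nu-2m-1,0)}(-1)=(-1)^m$ (the special case $b=0$ of $P_m^{(a,b)}(-1)=(-1)^m(b+1)_m/m!$) combined with \eqref{Fact} reproduces exactly identity \eqref{prod} of the paper.

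The route, however, is genuinely different from the paper's. The paper expands $\log\bigl(1-c_n(1+\tau^2-2\tau\cos u)/(1-\tau)^2\bigr)$ as a power series in the \emph{real} argument, rewrites $(1+\tau^2-2\tau\cos u)=(1-\tau)^2+4\tau\sin^2(u/2)$, applies the binomial theorem, linearizes $\sin^{2k}(u/2)$ via \eqref{Linea}, and then invokes summation identities from \cite{Dem-Mou} to obtain $\mu_n=-\sum_{s\geq1}\frac{(-1)^s}{s}[\alpha(4A_n^{(\tau,m)})]^s(\delta_s+\delta_{-s})$ with $\alpha(x)=x/(1+\sqrt{1-x})^2$. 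Your factorization $A_i+B_i\cos u=K_i|1-r_ie^{iu}|^2$ bypasses all of this machinery: no linearization formula, no external summation lemma, and the resulting measure $\mu_n=-\sum_{k\geq1}\frac{r_n^k}{k}(\delta_k+\delta_{-k})$ is described by a single parameter solving a quadratic. The two measures of course coincide (one checks $r_n=-\alpha(4A_n^{(\tau,m)})$), but your derivation is decidedly more elementary. The trade-off is that the paper's $\sin^{2k}$ route dovetails with the construction in Section~5, where replacing $\sin^{2k}(u/2)$ by $\cos^{2k}(u/2)$ yields the new infinitely-divisible law; your factorization would require a separate (though equally short) argument there.
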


\begin{proof}
If \eqref{Equa2} holds then 
\begin{equation*}
\frac{1-x_n^{(m)}}{2}\frac{(1+\tau^2-2\tau\cos u)}{(1-\tau)^2} < 1
\end{equation*}
for any $1 \leq n \leq m$ so that we can expand  
\begin{align*}
\ln\left(1- \frac{1-x_n^{(m)}}{2}\frac{(1+\tau^2-2\tau\cos u)}{(1-\tau)^2}\right) &= - \sum_{j \geq 1} \frac{1}{j}\left(\frac{1-x_n^{(m)}}{2(1-\tau)^2}\right)^j \left[(1-\tau)^2 + 4\tau\sin^2(u/2)\right]^j
\\& = - \sum_{j \geq 1} \frac{1}{j}\left(\frac{1-x_n^{(m)}}{2}\right)^j \sum_{k=0}^j\binom{j}{k}\left(\frac{4\tau}{(1-\tau)^2}\right)^k \sin^{2k}(u/2).
\end{align*}
The term corresponding to $k=0$ in the last series is 
\begin{align*}
-\sum_{j \geq 1} \frac{1}{j}\left(\frac{1-x_n^{(m)}}{2}\right)^j = \ln\left(\frac{1+x_n^{(m)}}{2}\right)
\end{align*}
while
\begin{align*}
\sum_{j \geq 1} \frac{1}{j}\left(\frac{1-x_n^{(m)}}{2}\right)^j  \sum_{k=1}^j\binom{j}{k}\left(\frac{4\tau}{(1-\tau)^2}\right)^k &\sin^{2k}\left(\frac{u}{2}\right)
 = \sum_{k \geq 1}\left(\frac{4\tau}{(1-\tau)^2}\right)^k \sin^{2k}\left(\frac{u}{2}\right) \sum_{j \geq k} \frac{1}{j}\binom{j}{k}\left(\frac{1-x_n^{(m)}}{2}\right)^j 
\\&  =  \sum_{k \geq 1}\frac{1}{k!}\left(\frac{2\tau(1-x_n^{(m)})}{(1-\tau)^2}\right)^k \sin^{2k}\left(\frac{u}{2}\right) \sum_{j \geq 0} \frac{\Gamma(j+k)}{j!}\left(\frac{1-x_n^{(m)}}{2}\right)^j 
\\& =   \sum_{k \geq 1}\frac{1}{k}\left(\frac{2\tau(1-x_n^{(m)})}{(1-\tau)^2}\right)^k \sin^{2k}\left(\frac{u}{2}\right)\sum_{j \geq 0} \frac{(k)_j}{j!}\left(\frac{1-x_n^{(m)}}{2}\right)^j 
\\& = \sum_{k \geq 1}\frac{1}{k}\left(\frac{\tau(1-x_n^{(m)})}{(1+x_n^{(m)})(1-\tau)^2}\right)^k 4^k\sin^{2k}\left(\frac{u}{2}\right).
\end{align*}
Using the linearization formula \eqref{Linea}, we get 
\begin{multline*}
\ln\left(1- \frac{1-x_n^{(m)}}{2}\frac{(1+\tau^2-2\tau\cos u)}{(1-\tau)^2}\right) = \ln\left(\frac{1+x_n^{(m)}}{2}\right) - \sum_{k \geq 1}\frac{1}{k}\left(\frac{\tau(1-x_n^{(m)})}{(1+x_n^{(m)})(1-\tau)^2}\right)^k
\\ \int e^{iux}\left\{\binom{2k}{k}\delta_0(dx) + \sum_{s=1}^{k}\left(-1\right)^{s}\binom{2k}{k-s} [\delta_{s} + \delta_{-s}](dx)\right\}.
\end{multline*}
Set 
\begin{equation*}
A_n^{(\tau, m)} := \frac{\tau(1-x_n^{(m)})}{(1+x_n^{(m)})(1-\tau)^2}
\end{equation*}
and note that \eqref{Equa2} entails $A_n^{(\tau,m)} \in (0,1/4)$ for any $1 \leq n \leq m$. Then, the computations performed in the proof of Proposition 3 in \cite{Dem-Mou} lead to the following expressions: 
\begin{equation}\label{A1}
- \sum_{k \geq 1}\frac{1}{k}\binom{2k}{k}\left(A_n^{(\tau, m)}\right)^k = 2\ln\left(\frac{1+\sqrt{1-4A_n^{(\tau, m)}}}{2}\right), 
\end{equation}
and for any $s \geq 1$,
\begin{equation}\label{A2} 
- \sum_{k \geq s}\frac{1}{k}\binom{2k}{k-s} \left(A_n^{(\tau, m)}\right)^k = \frac{1}{s}\left[\alpha\left(4A_n^{(\tau, m)}\right)\right]^s,
\end{equation}
where 
\begin{equation*}
\alpha(x) := \frac{x}{(1+\sqrt{1-x})^2}. 
\end{equation*}
Set 
\begin{equation*}
\mu_n^{(\nu, \tau, m)} := -\sum_{s \geq 1}\frac{(-1)^s}{s}\left[\alpha\left(4A_n^{(\tau, m)}\right)\right]^s [\delta_s+\delta_{-s}], \quad 1 \leq n \leq m,
\end{equation*}
then  
\begin{equation}\label{A3}
\int \mu_n^{(\tau,m)}(dx) = 2\ln\left[1+\alpha(4A_n^{(\tau,m)})\right] = -2\ln\left(\frac{1+\sqrt{1-4A_n^{(\tau, m)}}}{2}\right). 
\end{equation}
Gathering \eqref{A1}, \eqref{A2} and \eqref{A3}, we obtain 
\begin{equation*}
\ln\left(1- \frac{1-x_n^{(m)}}{2}\frac{(1+\tau^2-2\tau\cos u)}{(1-\tau)^2}\right) = \ln\left(\frac{1+x_n^{(m)}}{2}\right) + \int \left(e^{iux}-1\right)\mu_n^{(\nu,\tau,m)}(dx).
\end{equation*}
Finally, summing the last expression over $k \in \{1,\cdots, m\}$ and keeping in mind \eqref{A0}, it remains to prove that 
\begin{equation}\label{prod}
\frac{(2\nu-m)_m}{m!}\prod_{n=1}^m\frac{1+x_n^{(m)}}{2} = 1. 
\end{equation}
To this end, we appeal to the relation $P_m^{(a,b)}(x) = (-1)^mP_m^{(b,a)}(-x)$ together with the representation \eqref{RepJac} to see that
\begin{equation*}
0 = P_m^{(2\nu-2m-1,0)}(x_n^{(m)}) = (-1)^m P_m^{(0,2\nu-2m-1)}(-x_n^{(m)}) = (-1)^m{}_2F_1\left(-m, 2\nu-m, 1, \frac{1+x_n^{(m)}}{2}\right),
\end{equation*}
for any $1 \leq n \leq m$. Consequently, $\{(1+x_n^{(m)})/2, \, 1 \leq n \leq m\}$ is the set of zeros of the monic hypergeometric polynomial
\begin{equation*}
x \mapsto (-1)^m\frac{m!}{(2\nu-m)_m}{}_2F_1\left(-m, 2\nu-m, 1,x \right) = (-1)^m\frac{m!}{(2\nu-m)_m}\sum_{j=0}^m \binom{m}{j} \frac{(2\nu-m)_j}{(1)_j}(-x)^j 
\end{equation*}
therefore 
\begin{equation*}
\prod_{n=1}^m\frac{1+x_n^{(m)}}{2}  = \frac{m!}{(2\nu-m)_m}
\end{equation*}
as required. Since the measure $\mu_n^{(\nu, \tau,m)}$ clearly satisfies \eqref{QuasiInfi}, the proposition is proved. 
\end{proof}
\begin{remark}
A signed measure satisfying \eqref{QuasiInfi} is called a quasi-L\'evy measure (see e.g. E.12.2 and E.12.3 in \cite{Sato}). 
\end{remark}

Combining the previous proposition and \eqref{InfiDiv}, we deduce the following L\'evy-Khintchine type representation for the NGBD:
\begin{corollary}
Assume \eqref{Equa2} holds. Then 
\begin{equation*}
G^{(\nu, \tau, m)}(e^{iu}) = \exp\left\{imu + \int \left(e^{iux}-1\right)\left\{\sum_{j \geq 1}\frac{\tau^j}{j}\delta_j(dx) + \sum_{n=1}^m\mu_n^{(\nu, \tau,m)}(dx)\right\}\right\}
\end{equation*}
for any $u \in \mathbb{R}$. 
\end{corollary}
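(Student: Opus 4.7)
The plan is to exploit the factorization of $G^{(\nu,\tau,m)}(e^{iu})$ already present in \eqref{MGF} and to reduce the corollary to the combination of the identity \eqref{InfiDiv} with Proposition \ref{prop4}. Setting $\xi = e^{iu}$, I would first display $G^{(\nu,\tau,m)}(e^{iu})$ as a product of three distinct factors: the NBD characteristic function $\bigl((1-\tau)/(1-\tau e^{iu})\bigr)^{2\nu}$, the pure phase $e^{imu}$ arising from the $\xi^m$ in \eqref{MGF}, and the Jacobi-polynomial factor
\[
\Pi(u) := \left(\frac{(\tau e^{-iu}-1)(1-\tau e^{iu})}{(1-\tau)^2}\right)^m P_m^{(2\nu-2m-1,0)}\!\left(1 + \frac{2(1-\tau)^2}{(\tau e^{-iu}-1)(1-\tau e^{iu})}\right).
\]

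Next, I would invoke the two prepared ingredients separately. The identity \eqref{InfiDiv} rewrites the NBD factor as $\exp\bigl\{\int(e^{iux}-1)\sum_{j\geq 1}(\tau^j/j)\,\delta_j(dx)\bigr\}$. Proposition \ref{prop4}, which is valid precisely under the hypothesis \eqref{Equa2}, rewrites $\Pi(u)$ as $\exp\bigl\{\int(e^{iux}-1)\sum_{n=1}^m\mu_n^{(\nu,\tau,m)}(dx)\bigr\}$. The middle factor $e^{imu}$ is itself an exponential with exponent $imu$, which supplies the drift term in the stated formula. Multiplying the three exponentials amounts to adding the three exponents, which yields exactly the displayed identity.

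The only mild point requiring attention is that passing from a product of exponentials to an exponential of a sum relies on a coherent choice of branches of the complex logarithm. This is legitimate here because, under \eqref{Equa2}, the computations preceding Proposition \ref{prop4} show that each of the three factors is continuous and nonvanishing on $\mathbb{R}$, reducing to $1$ (respectively to $1$ for $e^{imu}$ at $u=0$) at the origin; each therefore admits a unique continuous logarithm with prescribed value at $0$, and the exponents produced by \eqref{InfiDiv} and Proposition \ref{prop4} are by construction those canonical branches. I do not expect any real obstacle in executing this plan: once Proposition \ref{prop4} has been established, the corollary is essentially a bookkeeping statement, and all the substantive analytic content -- in particular the construction of the quasi-L\'evy measures $\mu_n^{(\nu,\tau,m)}$ via the series manipulations involving \eqref{A1}, \eqref{A2} and \eqref{A3} -- has already been absorbed there.
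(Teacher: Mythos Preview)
Your proposal is correct and matches the paper's approach exactly: the paper simply states that the corollary follows by combining Proposition~\ref{prop4} with \eqref{InfiDiv}, which is precisely the three-factor decomposition you describe. Your added remark about logarithm branches is more care than the paper itself supplies, but the substance is identical.
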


\section{A new infinitely-divisible distribution}
We have already seen that the GNBD associated with a given hyperbolic Landau level $\epsilon_m$ is not infinitely-divisible unless $m=0$. Nonetheless, when $m \geq 1$, we can consider the total variation of the quasi-L\'evy measure
\begin{equation*}
\sum_{n=1}^m\mu_n^{(\nu, \tau,m)}
\end{equation*}
in order to obtain a L\'evy measure and as such an infinitely divisible distribution (or equivalently a L\'evy processes, \cite{Sato}). More precisely, 
\begin{equation*}
\sum_{j \geq 1}\frac{\tau^j}{j}\delta_j  + \sum_{n=1}^m|\mu_n^{(\nu, \tau,m)}| = \sum_{j \geq 1}\frac{\tau^j}{j}\delta_j + \sum_{n=1}^m \sum_{s \geq 1}\frac{1}{s}\left[\alpha\left(4A_n^{(\tau, m)}\right)\right]^s [\delta_s+\delta_{-s}]
\end{equation*}
is a L\'evy measure and the characteristic function of the corresponding infinitely-divisible distribution follows from the lines of the proof of Proposition \ref{prop4} read backward. There, we should replace $\sin^{2k}(u/2)$ by $\cos^{2k}(u/2)$ and use the linearization formula 
\begin{equation*}
4^j\cos^{2j}(u/2)=\binom{2j}{j} + 2\sum_{k=1}^{j}\binom{2j}{j-k} \cos(ku){\bf 1}_{\{j \geq 1\}}
\end{equation*}
together with
\begin{equation*}
(1-\tau)^2 + 4\tau \cos^2(u/2) = 1+\tau^2 + 2\tau \cos(u). 
\end{equation*}
The issue of the computations is: 
\begin{proposition}
Assume \eqref{Equa2} holds. Then 
\begin{multline*}
\exp\left\{imu + \int \left(e^{iux}-1\right)\left\{\sum_{j \geq 1}\frac{\tau^j}{j}\delta_j(dx) + \sum_{n=1}^m|\mu_n^{(\nu, \tau, m)}|(dx)\right\}\right\} = \left(\frac{1-\tau }{1-\tau e^{iu} }\right) ^{2\nu} e^{imu}\frac{m!}{(2\nu-m)_m}
\\ \prod_{n=1}^m \left(1 -\frac{1-x_n^{(m)}}{2}\frac{(1+\tau^2+2\tau\cos u)}{(1-\tau)^2} \right)^{-1}
\end{multline*}
is the characteristic function of an infinitely-divisible probability distribution.
\end{proposition}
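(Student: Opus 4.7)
The strategy is to reverse the manipulations performed in the proof of Proposition \ref{prop4}, now with the signed measure $\mu_n^{(\nu,\tau,m)}$ replaced by its total variation $|\mu_n^{(\nu,\tau,m)}|$. Recall that
\begin{equation*}
\mu_n^{(\nu,\tau,m)} = -\sum_{s\geq 1}\frac{(-1)^s}{s}\bigl[\alpha(4A_n^{(\tau,m)})\bigr]^s [\delta_s+\delta_{-s}],
\end{equation*}
so that $|\mu_n^{(\nu,\tau,m)}|$ amounts to erasing the alternating factors $(-1)^s$. Under \eqref{Equa2} one has $\alpha(4A_n^{(\tau,m)}) \in (0,1)$, hence each $|\mu_n^{(\nu,\tau,m)}|$ is a finite positive measure supported on $\mathbb{Z}\setminus\{0\}$; combined with the NBD L\'evy measure $\sum_{j\geq 1}(\tau^j/j)\delta_j$, it forms a bona fide L\'evy measure. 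Once the asserted equality of transforms is established, the L\'evy--Khintchine theorem will automatically identify the right-hand side as the characteristic function of an infinitely-divisible law with drift $m$.

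For the identity itself, I would compute
\begin{equation*}
\int (e^{iux}-1)|\mu_n^{(\nu,\tau,m)}|(dx) = 2\sum_{s\geq 1}\frac{1}{s}\bigl[\alpha(4A_n^{(\tau,m)})\bigr]^s [\cos(su)-1]
\end{equation*}
by running the chain \eqref{A1}--\eqref{A3} in reverse, with $\sin^{2k}(u/2)$ systematically replaced by $\cos^{2k}(u/2)$. The key algebraic ingredients are the linearization formula
\begin{equation*}
4^j\cos^{2j}(u/2) = \binom{2j}{j} + 2\sum_{k=1}^j \binom{2j}{j-k}\cos(ku)\mathbf{1}_{\{j\geq 1\}}
\end{equation*}
together with the trigonometric identity $(1-\tau)^2 + 4\tau\cos^2(u/2) = 1+\tau^2+2\tau\cos u$. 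Mirroring the computation of Proposition \ref{prop4} under these substitutions yields
\begin{equation*}
\int (e^{iux}-1)|\mu_n^{(\nu,\tau,m)}|(dx) = -\ln\left(\frac{1+x_n^{(m)}}{2}\right) - \ln\left(1 - \frac{1-x_n^{(m)}}{2}\frac{1+\tau^2+2\tau\cos u}{(1-\tau)^2}\right).
\end{equation*}
Exponentiating, summing over $1 \leq n \leq m$, multiplying by \eqref{InfiDiv} and by $e^{imu}$, and invoking \eqref{prod} to collapse $\prod_n (1+x_n^{(m)})/2 = m!/(2\nu-m)_m$ produces the claimed closed form.

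The main obstacle is verifying that the reversed series manipulations remain absolutely convergent and that every logarithm is well defined in the new regime. Since
\begin{equation*}
\max_{u\in\mathbb{R}}\frac{1+\tau^2+2\tau\cos u}{(1-\tau)^2} = \frac{(1+\tau)^2}{(1-\tau)^2}
\end{equation*}
(now attained at $u=0$ rather than $u=\pi$), the maximum is the same as in the sine case treated in \eqref{Equa1}, and condition \eqref{Equa2} continues to force $(1-x_n^{(m)})(1+\tau^2+2\tau\cos u)/[2(1-\tau)^2] < 1$ uniformly in $u \in \mathbb{R}$ and $1 \leq n \leq m$. This estimate legitimates every logarithmic expansion and the interchange of summations performed along the way, closing the argument.
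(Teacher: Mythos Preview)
Your proposal is correct and follows exactly the route the paper itself sketches: read the proof of Proposition~\ref{prop4} backward with $\sin^{2k}(u/2)$ replaced by $\cos^{2k}(u/2)$, invoke the cosine linearization formula and the identity $(1-\tau)^2+4\tau\cos^2(u/2)=1+\tau^2+2\tau\cos u$, and observe that \eqref{Equa2} still controls the relevant maximum since $\max_u (1+\tau^2+2\tau\cos u)/(1-\tau)^2=(1+\tau)^2/(1-\tau)^2$. The additional care you take in checking that $\alpha(4A_n^{(\tau,m)})\in(0,1)$ so that $|\mu_n^{(\nu,\tau,m)}|$ is a genuine finite L\'evy measure is a welcome explicit step the paper leaves implicit.
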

Apart from the drift part $u \mapsto imu$, the remaining integral 
\begin{equation*}
u \mapsto \int \left(e^{iux}-1\right)\left\{\sum_{j \geq 1}\frac{\tau^j}{j}\delta_j(dx) + \sum_{n=1}^m|\mu_n^{(\nu, \tau, m)}|(dx)\right\}
\end{equation*}
is the characteristic exponent of a compound L\'evy process (see e.g. \cite{Sato}, p.18-19) whose intensity is given by 
\begin{equation*}
\lambda := \int \left\{\sum_{j \geq 1}\frac{\tau^j}{j}\delta_j(dx) + \sum_{n=1}^m|\mu_n^{(\nu, \tau, m)}|(dx)\right\} = -\ln(1-\tau) - 2\sum_{n=1}^m \ln\left[1-\alpha\left(4A_n^{(\tau, m)}\right)\right] > 0
\end{equation*}
and jump distribution is the probability measure 
\begin{equation*}
\frac{1}{\lambda}\left\{\sum_{j \geq 1}\frac{\tau^j}{j}\delta_j+ \sum_{n=1}^m|\mu_n^{(\nu, \tau, m)}|\right\}.
\end{equation*}

\section{Concluding remarks}
In this paper, we performed the analysis of probability distributions arising from the coherent states formalism applied to orthonormal bases of generalized Bergman spaces. Surprisingly, the moment generating function takes a product form and we noticed that the failure of the infinite-divisibility property for these distributions goes in parallel with the appearance of the anti-bunching regions for their counting statistics. We also introduced a new infinitely-divisible probability distribution and it is clear from the unboundedness of its jumps in both directions that the L\'evy process it generates is valued in $\mathbb{Z}$. In this respect, it would be interesting to relate the latter as well to a quantum model. Finally, other interesting connections between the Maass operator $\Delta_{\nu}$ and stochastic processes theory have been already investigated in \cite{Ike-Mat}.

{\bf Acknowledgment}: we would like to thank Alain Comtet for his helpful remarks and the fruitful discussions on the relevance of the hyperbolic Landau levels in physics.



\end{document}